\documentclass[oneside]{amsart}
\pdfoutput=1
\usepackage{geometry}                %
\usepackage{amssymb,amsmath,latexsym}

\usepackage[utf8]{inputenc}
\usepackage[T1]{fontenc}
\usepackage{lmodern}
\usepackage[dvipsnames]{xcolor}

\usepackage{mathtools}
\usepackage{tikz}
\usetikzlibrary{positioning,calc,patterns}

\definecolor{col1}{RGB}{254,97,0}
\definecolor{col2}{RGB}{100,143,255}

\usepackage{ifpdf}

\usepackage{hyperref}

\newcommand{\Q}{{\mathbb{Q}}}

\newcommand{\Aut}{{\mathrm{Aut}}}  %
\newcommand{\Out}{{\mathrm{Out}}}  %
\newcommand{\Stab}{{\mathrm{Stab}}}

\newcommand{\dd}{{\mathrm{d}}}

\newcommand\vertex[1]{\fill #1 circle (.075)}
\newcommand\rootvertex[1]{\draw[fill=white] #1 +(-3pt,-3pt) rectangle +(3pt,3pt)}

\newcommand{\hecycle}[1]
{
  \foreach \x in {0,...,#1}
{
  \draw [thick] (360/#1*\x:.25) to (360/#1*\x:1);
}
}

\newtheorem{proposition}{Proposition}[section]
\newtheorem{definition}[proposition]{Definition}

\newtheorem*{theorem*}{Theorem}
\newtheorem{lemma}[proposition]{Lemma}

\theoremstyle{remark}
\newtheorem{remark}[proposition]{Remark}

\newcommand{\cF}{{\bf F}} %
\newcommand{\cX}{{\bf X}}%
\newcommand{\cY}{{\bf Y}}%
\newcommand{\cE}{{\bf E}}%
\newcommand{\cR}{{\bf R}}%
\newcommand{\cT}{{\bf T}}%

\hypersetup{
  pdfauthor={Michael Borinsky, Karen Vogtmann},
  pdftitle={Computing Euler characteristics using quantum field theory},
  pdfsubject={},
  pdfkeywords={},
  linkcolor  = RedViolet!85!black,
  citecolor  = BlueGreen!85!black,
  urlcolor   = YellowOrange!85!black,
  colorlinks = true,
}

\title[Computing Euler characteristics using quantum field theory]{Computing Euler characteristics using \\quantum field theory}
 \author{Michael Borinsky \and Karen Vogtmann}
\thanks{MB was supported by Dr.\ Max R\"ossler, the Walter Haefner Foundation and the ETH Z\"urich Foundation.}

\address{
Michael Borinsky\\
Institute for Theoretical Studies\\
ETH Z\"urich\\
8092 Z\"urich\\
Switzerland
}
\address{
Karen Vogtmann\\
University of Warwick\\
Mathematics Institute\\
Zeeman Building\\
Coventry CV4 7AL\\
United Kingdom
}

\begin{document}

\begin{abstract}
This paper explains how to use quantum field theory techniques to find formal power series that encode the virtual Euler characteristics of $\Out(F_n)$ and related graph complexes.  Finding such power series was a necessary step in the asymptotic analysis of $\chi(\Out(F_n))$ carried out in the authors' previous paper.
\end{abstract} 

\maketitle

\section{Introduction}

This article is based on the second author's talk at the conference Artin Groups, CAT(0) geometry and related topics (Charneyfest) in July 2021.   It is an exposition of some of the techniques that the authors used in their calculation of the  Euler characteristic of the group $\Out(F_n)$~\cite{BoVo}.  By results of Kontsevich, this  Euler characteristic is the same as the orbifold Euler characteristic of   the \emph{Lie graph complex} he defined in~\cite{Kont}.  Kontsevich also defined  \emph{commutative}  and \emph{associative} graph complexes, and discussed their orbifold Euler characteristics in Section~7.B of that  paper (Section~7.2 in a circulating preprint version). 

We will use the term {\em virtual} Euler characteristic as a uniform term for both Euler characteristics mentioned in the above paragraph.  It is different from the alternating sum of the Betti numbers, which we will call the \emph{classical} Euler characteristic.   The virtual Euler characteristic is used in group theory partly because it has better properties with respect to short exact sequences and partly because it has deep connections to number theory.  It is defined by taking the classical Euler characteristic of a  torsion-free  subgroup of finite index and dividing by the index. 
 Kontsevich defined the orbifold Euler characteristic of a graph complex on the chain level, by counting each generator  (i.e.\ graph $G$)  with weight  $1/|\Aut(G)|$.  

In this paper we use the action of $\Out(F_n)$ on Outer space  to reduce   various virtual Euler characteristic computations  to graph-counting problems, then use quantum field theory (QFT) methods to solve those problems.      This paper can serve as a  tutorial on how this works, and in particular is an explanation of Equations~(1) and (3) of~\cite{Kont}~Section~7.B. Such QFT methods were studied and further developed in the first author's thesis~\cite{Borinsky}.

As Kontsevich notes, the virtual Euler characteristic is much easier to compute than the classical Euler characteristic. However, the classical Euler characteristic is what one needs to infer information about  the dimension of the actual cohomology groups. 
To compute the classical Euler characteristic one must incorporate information about the automorphism groups of graphs into the count.
Kontsevich writes that it is reasonable to expect that `most'  graphs have no nontrivial automorphisms.   From that remark one might expect that the two Euler characteristics of $\Out(F_n)$ are asymptotically the same.   However,  in~\cite{BoVo2} we combine techniques described  here with asymptotic analysis to prove  that the ratio of the two is asymptotically equal to $e^{-\frac{1}{4}}$, so in particular the difference between the two grows as the rank $n$ grows.   In particular this  solves   Problem~6.5 of the paper~\cite{MSS} of Morita, Sakasai and Suzuiki.

\section{Counting admissible graphs}\label{intro}

To a topologist, a graph is a 1-dimensional CW-complex.  Finite graphs, all of whose vertices have valence at least three are called {\em admissible}.  Note that a leaf, to a topologist, is an edge terminating in a 1-valent vertex, so an admissible graph does not have leaves. One checks easily that there are only a finite number of admissible graphs with a given Euler characteristic.    We remind the reader that only connected admissible graphs occur in Outer space~\cite{CV}.

In this paper, however, we will use the following combinatorial definition of a graph, which is much more convenient  for solving counting problems.

\begin{definition} \label{def:graph}
A {\em graph} is a triple $G=(H,E,V)$, where $H$ is a finite set (the half-edges), $E$ is an involution (whose orbits are the edges and leaves of $G$), and $V$ is a partition of $H$ (the vertices of $G$).
(See Figure~\ref{HEV}.)  An  {\em  isomorphism} between two graphs is  a bijection between the respective sets of half-edges that
commutes with the involution and respects the partitions.
\end{definition} 

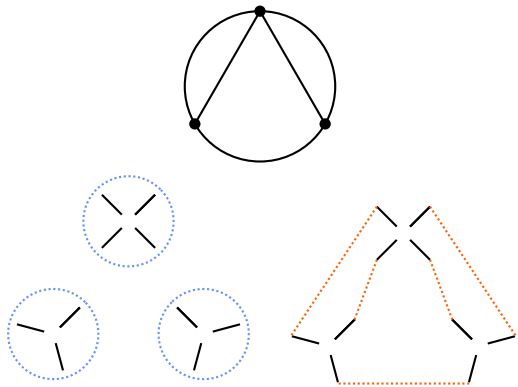
\begin{figure}
\begin{center}
 \begin{tikzpicture} \coordinate (t) at (0,1);\coordinate (l) at (210:1); \coordinate (r) at (-30:1); \draw[thick] (0,0) circle(1); \vertex{(t)}; \vertex{(l)}; \vertex{(r)}; \draw [thick] (l) to (t); \draw [thick] (r) to (t); \end{tikzpicture}
\end{center}

\begin{center}
\begin{tikzpicture}[scale=.5] \begin{scope}[rotate=45] \hecycle{4} \draw [col2, thick, densely dotted] (0,0) circle (1.2); \end{scope} \begin{scope}[xshift=-2cm, yshift=-3cm, rotate=45] \hecycle{3} \draw [col2,thick, densely dotted] (0,0) circle (1.2); \end{scope} \begin{scope}[xshift=2cm, yshift=-3cm, rotate=--135 ] \hecycle{3} \draw [col2, thick, densely dotted] (0,0) circle (1.2); \end{scope} \end{tikzpicture}
\,\,\,\,
\begin{tikzpicture}[scale=.5] \begin{scope}[rotate=45] \hecycle{4} \end{scope} \begin{scope}[xshift=-2cm, yshift=-3cm, rotate=45] \hecycle{3} \end{scope} \begin{scope}[xshift=2cm, yshift=-3cm, rotate=--135 ] \hecycle{3} \end{scope} \draw[col1,thick,densely dotted] (-45:1) to ( 1.3,-2.3); \draw[col1, thick,densely dotted] (-135:1) to ( -1.3,-2.3); \draw[col1, thick,densely dotted] (45:1) to ( 3,-2.75); \draw[col1, thick,densely dotted] (135:1) to ( -3,-2.75); \draw[col1, thick,densely dotted] (1.75,-4) to ( -1.75,-4); \end{tikzpicture}
\end{center}
\caption{
A graph $G$ with its representation as triple $(H,E,V)$ below.
Half-edges are depicted as short black lines, the partition $V$ is indicated with dotted blue circles and the matching $E$ of half-edges,
with dotted orange lines.}\label{HEV}
\end{figure}

Note that an isolated vertex is not considered to be a graph by this definition, but it does allow for the empty graph.  Also notice the difference between univalent vertices and leaves:  a univalent vertex is a block of the partition $V$ consisting of exactly one half-edge, whereas a leaf is a half-edge that is fixed by the involution $E$.  In this section and the next  we will assume each block of $V$ has at least 3 elements, i.e.\ the partition is {\em fat}; this corresponds to assuming all vertices of $G$ have valence at least 3.  In Sections~\ref{intro}--\ref{secgc2}, we will only consider graphs where the involution has no fixed points, so $E$ is a total matching of the half-edges; in other words we only consider admissible graphs 
for now.

\begin{remark}
Leaves of graphs are often called \emph{hairs} or \emph{legs}. 
The latter name is used in QFT and specifically in~\cite{Borinsky}. The name leaf is standard in more combinatorial contexts and was used in~\cite{BoVo}. Here, we will stick to this convention, as trees, connected graphs without cycles, are central to our considerations and trees carry leaves.
\end{remark}

We denote the set of automorphisms of a graph $G$ by $\Aut(G)$, and  
 by $v(G)$, $e(G)$ and $\chi(G) = v(G)-e(G)$   the number of vertices,  edges and the  Euler characteristic  of $G$.

Let $\mathcal G$ denote the set of (isomorphism classes of) admissible graphs, which we redefine to include  the empty graph,  and  $\mathcal G(m,k)$ the (finite) subset consisting of admissible graphs with $m$ edges and $k$ vertices. 
For instance, $\mathcal G(2,1) = \{ {\begin{tikzpicture}[x=1ex,y=1ex,baseline={([yshift=-.5ex]current bounding box.center)}] \coordinate (vm); \coordinate [left=.7 of vm] (v0); \coordinate [right=.7 of vm] (v1); \draw (v0) circle(.7); \draw (v1) circle(.7); \filldraw (vm) circle (1pt); \end{tikzpicture}} \}$ and $\mathcal G(3,2) = \{ {\begin{tikzpicture}[x=1ex,y=1ex,baseline={([yshift=-.5ex]current bounding box.center)}] \coordinate (v0); \coordinate [right=1.5 of v0] (v1); \coordinate [left=.7 of v0] (i0); \coordinate [right=.7 of v1] (o0); \draw (v0) -- (v1); \filldraw (v0) circle (1pt); \filldraw (v1) circle (1pt); \draw (i0) circle(.7); \draw (o0) circle(.7); \end{tikzpicture}}, {\begin{tikzpicture}[x=1ex,y=1ex,baseline={([yshift=-.5ex]current bounding box.center)}] \coordinate (vm); \coordinate [left=1 of vm] (v0); \coordinate [right=1 of vm] (v1); \draw (v0) -- (v1); \draw (vm) circle(1); \filldraw (v0) circle (1pt); \filldraw (v1) circle (1pt); \end{tikzpicture}}\}$.
Since  the set of admissible graphs with a given Euler characteristic is finite,
 the coefficient of $\hbar$ in the following formal power series is finite, i.e.\ this series serves as a generating function for (isomorphism classes of) admissible graphs.
\begin{align*} \cF(\hbar)=\sum_{G\in\mathcal G } \frac{\hbar^{-\chi(G)}}{|\Aut(G)|} &= 1 + \frac{\hbar}{|\Aut( {\begin{tikzpicture}[x=1ex,y=1ex,baseline={([yshift=-.5ex]current bounding box.center)}] \coordinate (vm); \coordinate [left=.7 of vm] (v0); \coordinate [right=.7 of vm] (v1); \draw (v0) circle(.7); \draw (v1) circle(.7); \filldraw (vm) circle (1pt); \end{tikzpicture}} )|} + \frac{\hbar}{|\Aut( {\begin{tikzpicture}[x=1ex,y=1ex,baseline={([yshift=-.5ex]current bounding box.center)}] \coordinate (v0); \coordinate [right=1.5 of v0] (v1); \coordinate [left=.7 of v0] (i0); \coordinate [right=.7 of v1] (o0); \draw (v0) -- (v1); \filldraw (v0) circle (1pt); \filldraw (v1) circle (1pt); \draw (i0) circle(.7); \draw (o0) circle(.7); \end{tikzpicture}} )|} + \frac{\hbar}{|\Aut( {\begin{tikzpicture}[x=1ex,y=1ex,baseline={([yshift=-.5ex]current bounding box.center)}] \coordinate (vm); \coordinate [left=1 of vm] (v0); \coordinate [right=1 of vm] (v1); \draw (v0) -- (v1); \draw (vm) circle(1); \filldraw (v0) circle (1pt); \filldraw (v1) circle (1pt); \end{tikzpicture}} )|} + \ldots \\
&=1 + \frac{\hbar}{8} + \frac{\hbar}{8} + \frac{\hbar}{12} + \ldots =1 + \frac{1}{3} \hbar + \frac{41}{36} \hbar^2 +\ldots \end{align*}
We use the symbol $\hbar$ as a formal variable that marks the
negative Euler characteristic to conform with typical quantum field theory notation.

All admissible graphs with Euler characteristic $-2$ contribute to the $\hbar^2$ coefficients in the expression above. These graphs are 
\begin{gather*} { \begin{tikzpicture}[x=1ex,y=1ex,baseline={([yshift=-.5ex]current bounding box.center)}] \coordinate (v); \coordinate [above=1.2 of v] (v01); \coordinate [right=1.5 of v01] (v11); \coordinate [left=.7 of v01] (i01); \coordinate [right=.7 of v11] (o01); \draw (v01) -- (v11); \filldraw (v01) circle (1pt); \filldraw (v11) circle (1pt); \draw (i01) circle(.7); \draw (o01) circle(.7); \coordinate [below=1.2 of v] (v02); \coordinate [right=1.5 of v02] (v12); \coordinate [left=.7 of v02] (i02); \coordinate [right=.7 of v12] (o02); \draw (v02) -- (v12); \filldraw (v02) circle (1pt); \filldraw (v12) circle (1pt); \draw (i02) circle(.7); \draw (o02) circle(.7); \end{tikzpicture} }, { \begin{tikzpicture}[x=1ex,y=1ex,baseline={([yshift=-.5ex]current bounding box.center)}] \coordinate (v); \coordinate [above=1.2 of v](v01); \coordinate [right=2 of v01] (v11); \coordinate [right=1 of v01] (vm1); \draw (v01) -- (v11); \draw (vm1) circle(1); \filldraw (v01) circle (1pt); \filldraw (v11) circle (1pt); \coordinate [below=1.2 of v](v02); \coordinate [right=2 of v02] (v12); \coordinate [right=1 of v02] (vm2); \draw (v02) -- (v12); \draw (vm2) circle(1); \filldraw (v02) circle (1pt); \filldraw (v12) circle (1pt); \end{tikzpicture} } , { \begin{tikzpicture}[x=1ex,y=1ex,baseline={([yshift=-.5ex]current bounding box.center)}] \coordinate (v); \coordinate [above=1.2 of v] (vm1); \coordinate [left=1 of vm1] (v01); \coordinate [right=1 of vm1] (v11); \draw (v01) -- (v11); \draw (vm1) circle(1); \filldraw (v01) circle (1pt); \filldraw (v11) circle (1pt); \coordinate [below=1.2 of v] (vm2); \coordinate [left=.75 of vm2] (v02); \coordinate [right=.75 of vm2] (v12); \coordinate [left=.7 of v02] (i02); \coordinate [right=.7 of v12] (o02); \draw (v02) -- (v12); \filldraw (v02) circle (1pt); \filldraw (v12) circle (1pt); \draw (i02) circle(.7); \draw (o02) circle(.7); \end{tikzpicture} } , { \begin{tikzpicture}[x=1ex,y=1ex,baseline={([yshift=-.5ex]current bounding box.center)}] \coordinate (v) ; \def \n {3} \def \rad {1.2} \def \rud {1.9} \foreach \s in {1,...,\n} { \def \angle {360/\n*(\s - 1)} \coordinate (s) at ([shift=({\angle}:\rad)]v); \coordinate (u) at ([shift=({\angle}:\rud)]v); \draw (v) -- (s); \filldraw (s) circle (1pt); \draw (u) circle (.7); } \filldraw (v) circle (1pt); \end{tikzpicture} } , { \begin{tikzpicture}[x=1ex,y=1ex,baseline={([yshift=-.5ex]current bounding box.center)}] \coordinate (v) ; \def \n {2} \def \rad {.7} \def \rud {2} \def \rid {2.7} \foreach \s in {1,...,\n} { \def \angle {360/\n*(\s - 1)} \coordinate (s) at ([shift=({\angle}:\rad)]v); \coordinate (u) at ([shift=({\angle}:\rud)]v); \coordinate (t) at ([shift=({\angle}:\rid)]v); \draw (s) -- (u); \filldraw (u) circle (1pt); \filldraw (s) circle (1pt); \draw (t) circle (.7); } \draw (v) circle(\rad); \end{tikzpicture} } , { \begin{tikzpicture}[x=1ex,y=1ex,baseline={([yshift=-.5ex]current bounding box.center)}] \coordinate (v0); \coordinate[above left=1.5 of v0] (v1); \coordinate[below left=1.5 of v0] (v2); \coordinate[above right=1.5 of v0] (v3); \coordinate[below right=1.5 of v0] (v4); \draw (v1) to[bend left=80] (v2); \draw (v1) to[bend right=80] (v2); \draw (v3) to[bend right=80] (v4); \draw (v3) to[bend left=80] (v4); \draw (v1) -- (v3); \draw (v2) -- (v4); \filldraw (v1) circle(1pt); \filldraw (v2) circle(1pt); \filldraw (v3) circle(1pt); \filldraw (v4) circle(1pt); \end{tikzpicture} } , { \begin{tikzpicture}[x=1ex,y=1ex,baseline={([yshift=-.5ex]current bounding box.center)}] \coordinate (v0); \coordinate[right=.5 of v0] (v3); \coordinate[right=1.5 of v3] (v4); \coordinate[above left=1.5 of v0] (v1); \coordinate[below left=1.5 of v0] (v2); \coordinate[right=.7 of v4] (o); \draw (v3) to[bend left=20] (v2); \draw (v3) to[bend right=20] (v1); \draw (v1) to[bend right=80] (v2); \draw (v1) to[bend left=80] (v2); \draw (v3) -- (v4); \filldraw (v1) circle(1pt); \filldraw (v2) circle(1pt); \filldraw (v3) circle(1pt); \filldraw (v4) circle(1pt); \draw (o) circle(.7); \end{tikzpicture} } , { \begin{tikzpicture}[x=1ex,y=1ex,baseline={([yshift=-.5ex]current bounding box.center)}] \coordinate (v); \def \n {3} \def \rad {1.2} \foreach \s in {1,...,\n} { \def \angle {360/\n*(\s - 1)} \coordinate (s) at ([shift=({\angle}:\rad)]v); \draw (v) -- (s); \filldraw (s) circle (1pt); } \draw (v) circle (\rad); \filldraw (v) circle (1pt); \end{tikzpicture} } , { \begin{tikzpicture}[x=1ex,y=1ex,baseline={([yshift=-.5ex]current bounding box.center)}] \coordinate (v); \coordinate [above=1.2 of v](vm1); \coordinate [left=1 of vm1] (v01); \coordinate [right=1 of vm1] (v11); \draw (v01) -- (v11); \draw (vm1) circle(1); \filldraw (v01) circle (1pt); \filldraw (v11) circle (1pt); \coordinate [below=1.2 of v] (vm2); \coordinate [left=.7 of vm2] (v02); \coordinate [right=.7 of vm2] (v12); \draw (v02) circle(.7); \draw (v12) circle(.7); \filldraw (vm2) circle (1pt); \end{tikzpicture} } , { \begin{tikzpicture}[x=1ex,y=1ex,baseline={([yshift=-.5ex]current bounding box.center)}] \coordinate (v); \coordinate [above=1.2 of v] (vm1); \coordinate [left=.75 of vm1] (v01); \coordinate [right=.75 of vm1] (v11); \coordinate [left=.7 of v01] (i01); \coordinate [right=.7 of v11] (o01); \draw (v01) -- (v11); \filldraw (v01) circle (1pt); \filldraw (v11) circle (1pt); \draw (i01) circle(.7); \draw (o01) circle(.7); \coordinate [below=1.2 of v] (vm2); \coordinate [left=.7 of vm2] (v02); \coordinate [right=.7 of vm2] (v12); \draw (v02) circle(.7); \draw (v12) circle(.7); \filldraw (vm2) circle (1pt); \end{tikzpicture} } , { \begin{tikzpicture}[x=1ex,y=1ex,baseline={([yshift=-.5ex]current bounding box.center)}] \coordinate (vm); \coordinate [left=.7 of vm] (v0); \coordinate [right=.7 of vm] (v1); \coordinate [above=.7 of v0] (v2); \coordinate [above=.7 of v1] (v3); \draw (v0) circle(.7); \draw (v1) circle(.7); \draw (v3) arc(0:180:.7) (v2); \filldraw (vm) circle (1pt); \filldraw (v2) circle (1pt); \filldraw (v3) circle (1pt); \end{tikzpicture} } ,\\
{ \begin{tikzpicture}[x=1ex,y=1ex,baseline={([yshift=-.5ex]current bounding box.center)}] \coordinate (v) ; \def \n {3} \def \rad {1.2} \def \rud {1.9} \foreach \s in {2,...,\n} { \def \angle {360/\n*(\s - 1)} \coordinate (s) at ([shift=({\angle}:\rad)]v); \coordinate (u) at ([shift=({\angle}:\rud)]v); \draw (v) -- (s); \filldraw (s) circle (1pt); \draw (u) circle (.7); } \filldraw (v) circle (1pt); \coordinate (s) at ([shift=({0}:.7)]v); \draw (s) circle (.7); \end{tikzpicture} } , { \begin{tikzpicture}[x=1ex,y=1ex,baseline={([yshift=-.5ex]current bounding box.center)}] \coordinate (v0); \coordinate [right=1.5 of v0] (v1); \coordinate [left=.7 of v0] (i0); \coordinate [right=.7 of v1] (o0); \coordinate [right=.7 of o0] (v2); \coordinate [right=.7 of v2] (o1); \draw (v0) -- (v1); \filldraw (v0) circle (1pt); \filldraw (v1) circle (1pt); \filldraw (v2) circle (1pt); \draw (i0) circle(.7); \draw (o0) circle(.7); \draw (o1) circle(.7); \end{tikzpicture} } , { \begin{tikzpicture}[x=1ex,y=1ex,baseline={([yshift=-.5ex]current bounding box.center)}] \coordinate (vm); \coordinate [left=1 of vm] (v0); \coordinate [right=1 of vm] (v1); \coordinate [right=1.5 of v1] (v2); \coordinate [right=.7 of v2] (o); \draw (v0) -- (v1); \draw (v1) -- (v2); \draw (vm) circle(1); \draw (o) circle(.7); \filldraw (v0) circle (1pt); \filldraw (v1) circle (1pt); \filldraw (v2) circle (1pt); \end{tikzpicture} } , { \begin{tikzpicture}[x=1ex,y=1ex,baseline={([yshift=-.5ex]current bounding box.center)}] \coordinate (v0); \coordinate[right=.5 of v0] (v3); \coordinate[above left=1.5 of v0] (v1); \coordinate[below left=1.5 of v0] (v2); \coordinate[right=.7 of v3] (o); \draw (v3) to[bend left=20] (v2); \draw (v3) to[bend right=20] (v1); \draw (v1) to[bend right=80] (v2); \draw (v1) to[bend left=80] (v2); \filldraw (v1) circle(1pt); \filldraw (v2) circle(1pt); \filldraw (v3) circle(1pt); \draw (o) circle(.7); \end{tikzpicture} } , { \begin{tikzpicture}[x=1ex,y=1ex,baseline={([yshift=-.5ex]current bounding box.center)}] \coordinate (v); \coordinate [above=1.2 of v] (vm1); \coordinate [left=.7 of vm1] (v01); \coordinate [right=.7 of vm1] (v11); \draw (v01) circle(.7); \draw (v11) circle(.7); \filldraw (vm1) circle (1pt); \coordinate [below=1.2 of v] (vm2); \coordinate [left=.7 of vm2] (v02); \coordinate [right=.7 of vm2] (v12); \draw (v02) circle(.7); \draw (v12) circle(.7); \filldraw (vm2) circle (1pt); \end{tikzpicture} } , { \begin{tikzpicture}[x=1ex,y=1ex,baseline={([yshift=-.5ex]current bounding box.center)}] \coordinate (vm); \coordinate [left=1 of vm] (v0); \coordinate [right=1 of vm] (v1); \draw (v0) to[bend left=45] (v1); \draw (v0) to[bend right=45] (v1); \draw (vm) circle(1); \filldraw (v0) circle (1pt); \filldraw (v1) circle (1pt); \end{tikzpicture} } , { \begin{tikzpicture}[x=1ex,y=1ex,baseline={([yshift=-.5ex]current bounding box.center)}] \coordinate (vm); \coordinate [left=.7 of vm] (v0); \coordinate [right=.7 of vm] (v1); \coordinate [left=.7 of v0] (vc1); \coordinate [right=.7 of v1] (vc2); \draw (vc1) circle(.7); \draw (vc2) circle(.7); \draw (vm) circle(.7); \filldraw (v0) circle (1pt); \filldraw (v1) circle (1pt); \end{tikzpicture} } , { \begin{tikzpicture}[x=1ex,y=1ex,baseline={([yshift=-.5ex]current bounding box.center)}] \coordinate (vm); \coordinate [left=1 of vm] (v0); \coordinate [right=1 of vm] (v1); \coordinate [right=.7 of v1] (o); \draw (v0) -- (v1); \draw (vm) circle(1); \draw (o) circle(.7); \filldraw (v0) circle (1pt); \filldraw (v1) circle (1pt); \end{tikzpicture} } , { \begin{tikzpicture}[x=1ex,y=1ex,baseline={([yshift=-.5ex]current bounding box.center)}] \coordinate (v) ; \def \rad {1.5} \coordinate (s1) at ([shift=(0:1.2)]v); \coordinate (s2) at ([shift=(120:\rad)]v); \coordinate (s3) at ([shift=(240:\rad)]v); \coordinate [right=.7 of s1] (o); \draw (v) to[out=180,in=210] (s2) to[out=30,in=60] (v); \draw (v) to[out=300,in=330] (s3) to[out=150,in=180] (v); \draw (v) -- (s1); \filldraw (v) circle (1pt); \filldraw (s1) circle (1pt); \draw (o) circle(.7); \end{tikzpicture} } \text{ and } { \begin{tikzpicture}[x=1ex,y=1ex,baseline={([yshift=-.5ex]current bounding box.center)}] \coordinate (v); \def \rad {1.5} \coordinate (s1) at ([shift=(0:\rad)]v); \coordinate (s2) at ([shift=(120:\rad)]v); \coordinate (s3) at ([shift=(240:\rad)]v); \draw (v) to[out=60,in=90] (s1) to[out=-90,in=0-60] (v); \draw (v) to[out=180,in=210] (s2) to[out=30,in=60] (v); \draw (v) to[out=300,in=330] (s3) to[out=150,in=180] (v); \filldraw (v) circle (1pt); \end{tikzpicture} }. \end{gather*}
This illustrates that the number of graphs becomes 
quickly too large to be handled one-by-one.

\subsection{Labeled graphs versus isomorphism classes of graphs}\label{unlabeled} Although we want to count isomorphism classes of  graphs, it is convenient to start with {\em labeled}  graphs, where the half-edges are the set of consecutive integers $H = \{1,2,\ldots,|H|\}$ (see   Figure~\ref{labels} for an example).  %
A labeled graph will be denoted $lG$.
The set of labeled admissible graphs with $m$  edges and  $k$ vertices  will be denoted  $\mathcal{LG}(m,k)$.

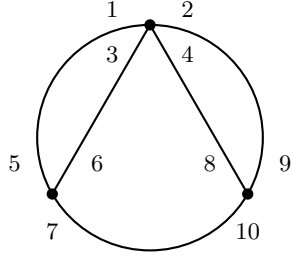
\begin{figure}
\begin{center}
 \begin{tikzpicture} \coordinate (t) at (0,1.5);\coordinate (l) at (210:1.5); \coordinate (r) at (-30:1.5); \draw[thick] (0,0) circle(1.5); \vertex{(t)}; \vertex{(l)}; \vertex{(r)}; \draw [thick] (l) to (t); \draw [thick] (r) to (t); \node (1) at ($(t)+(-.5,.2)$) {\small{$1$}}; \node (2) at ($(t)+(.5,.2)$) {\small{$2$}}; \node (3) at ($(t)+(-.5,-.4)$) {\small{$3$}}; \node (4) at ($(t)+(.5,-.4)$) {\small{$4$}}; \node (5) at ($(l)+(-.5,.4)$) {\small{$5$}}; \node (6) at ($(l)+(.6,.4)$) {\small{$6$}}; \node (7) at ($(l)+(0,-.5)$) {\small{$7$}}; \node (8) at ($(r)+(-.5,.4)$) {\small{$8$}}; \node (9) at ($(r)+(.5,.4)$) {\small{$9$}}; \node (10) at ($(r)+(0,-.5)$) {\small{$10$}}; \end{tikzpicture}
\end{center}
\caption{A half-edge labeled graph $lG$.}\label{labels}
\end{figure}

Two labeled graphs are equal if and only if they are represented by identical triples $(H,E,V)$.
The following proposition relates the count of labeled graphs with the number of isomorphism classes of graphs.

\begin{proposition}
\label{prop:graph_sum}
$$ \sum_{G\in\mathcal G(m,k) } \frac{1}{|\Aut(G)|}=\frac{|\mathcal{LG}(m,k)|}{(2m)!}.$$
\end{proposition}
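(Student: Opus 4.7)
The plan is to apply the orbit-stabilizer theorem to the natural action of the symmetric group $S_{2m}$ on labeled admissible graphs with $m$ edges and $k$ vertices. Since graphs in $\mathcal{G}(m,k)$ have no leaves, the involution $E$ is a fixed-point-free matching, so the half-edge set has size exactly $2m$ and a labeling is a bijection $H \to \{1,\ldots,2m\}$.

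First I would make precise the $S_{2m}$-action: a permutation $\sigma \in S_{2m}$ acts on a labeled graph $\ell G = (H, E, V)$ (with $H = \{1,\ldots,2m\}$) by sending it to $(H, \sigma E \sigma^{-1}, \sigma V)$, where $\sigma V$ denotes the partition obtained by applying $\sigma$ pointwise to each block. It is immediate from the definition of graph isomorphism that two labeled graphs lie in the same $S_{2m}$-orbit if and only if they represent the same isomorphism class in $\mathcal{G}(m,k)$. Hence the orbits of this action on $\mathcal{LG}(m,k)$ are in bijection with $\mathcal{G}(m,k)$.

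Next I would identify the stabilizer. By the definition of isomorphism given in the paper, a permutation $\sigma$ fixes the triple $(H, E, V)$ if and only if $\sigma$ is an automorphism of $G$ in the sense of the Definition, so $\mathrm{Stab}_{S_{2m}}(\ell G) \cong \Aut(G)$. The orbit-stabilizer theorem then gives
\[
|\mathrm{Orbit}(\ell G)| = \frac{(2m)!}{|\Aut(G)|}.
\]
Summing this over one representative $\ell G$ of each orbit (equivalently, over isomorphism classes $G \in \mathcal{G}(m,k)$) yields
\[
|\mathcal{LG}(m,k)| = \sum_{G \in \mathcal{G}(m,k)} \frac{(2m)!}{|\Aut(G)|},
\]
and dividing by $(2m)!$ gives the stated identity.

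There is no real obstacle here; this is a textbook orbit-counting argument. The only point requiring minor care is the bookkeeping that $|H| = 2m$ (because leaves are excluded, so every half-edge is matched) and that the stabilizer in $S_{2m}$ of a labeled graph coincides exactly with $\Aut(G)$ under the identification $H \leftrightarrow \{1,\ldots,2m\}$ — both facts follow immediately from the definitions already laid out in the excerpt.
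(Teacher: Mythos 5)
Your proof is correct and follows exactly the same route as the paper: the orbit-stabilizer theorem applied to the $\Sigma_{2m}$-action on labeled graphs, with orbits identified with isomorphism classes and stabilizers with $\Aut(G)$. You spell out the action and the stabilizer identification slightly more explicitly than the paper does, but the argument is identical.
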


\begin{proof}
The symmetric group $\Sigma_{2m}$ acts on $\mathcal{LG}(m,k)$ by permuting the labels.  The orbit-stabilizer theorem  says
$$(2m)!=  |(\hbox{orbit }lG)| \cdot |(\hbox{stabilizer }lG)|.$$ The orbit of a labeled graph $lG$ is nothing but an isomorphism class
of labeled graphs i.e.\ an unlabeled graph, and the stabilizer of $lG$ is isomorphic to $\Aut(G)$, giving
\begin{gather*} |\mathcal{LG}(m,k)|= \sum_{\rm orbits}|(\rm orbit)|=\sum_{G\in \mathcal{G}(m,k)} \frac{(2m)!}{|\Aut(G)|}. \qedhere \end{gather*}
\end{proof}
Multiplying by $\hbar^{m-k}$ and summing over all $m$ and $k$ gives the following expression for the generating function $\cF$:
$$ \cF(\hbar)=\sum_{G\in\mathcal G } \frac{\hbar^{-\chi(G)}}{|\Aut(G)|}=\sum_{m,k \geq 0}\frac{1}{(2m)!}|\mathcal{LG}(m,k)|\hbar^{m-k}.$$

\subsection{Counting labeled graphs}\label{labeled} Now we want to count labeled graphs.  Given $2m$ half-edges, any matching of $H$ and any fat partition $V$ of $H$ corresponds to a unique labeled admissible graph, so to count the number of such graphs we can make with $2m$ half-edges, we need to count the number of matchings and the number of fat partitions.   These counts are standard, we record them below.

\begin{lemma}\label{lem:match} There are $(2m-1)!! = (2m-1) \cdot (2m-3) \cdots 3 \cdot 1$ matchings of a set of $2m$ labeled elements.
\end{lemma}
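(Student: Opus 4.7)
The plan is to prove this by a direct recursive/inductive argument, which is both the shortest route and the cleanest bijectively.

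First I would fix a distinguished element of the $2m$-element set, say the one labeled $1$. In any matching, the element $1$ is paired with exactly one of the other $2m-1$ elements, and there are exactly $2m-1$ choices for that partner. Once that pair is removed, what remains is a matching of the $2(m-1)$ remaining labeled elements. By induction on $m$, the number of matchings of a $2(m-1)$-element set is $(2m-3)!!$, so the total is
\[
(2m-1)\cdot(2m-3)!! \;=\; (2m-1)!!.
\]
The base case $m=0$ is the (unique) empty matching, giving the convention $(-1)!! = 1$.

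As a sanity check and an alternative presentation, I would note the double-counting identity
\[
(2m)! \;=\; 2^{m}\,m!\,(2m-1)!!,
\]
obtained by observing that listing all $2m$ elements in a sequence is equivalent to choosing a matching (of $m$ unordered pairs), then ordering the $m$ pairs in $m!$ ways, then ordering the two elements within each pair in $2^m$ ways. Solving for $(2m-1)!!$ recovers the same count, and matches the classical factorization
\[
(2m)! \;=\; \bigl[2\cdot 4\cdots (2m)\bigr]\cdot\bigl[1\cdot 3\cdots(2m-1)\bigr] \;=\; 2^m m!\cdot(2m-1)!!.
\]

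There is essentially no obstacle here: the recursion step is immediate from the orbit-style argument, and the closed form follows by unwinding it. The only minor care needed is to fix a convention for $m=0$ (or equivalently, to start the induction at $m=1$ with the single matching of $\{1,2\}$).
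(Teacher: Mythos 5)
Your proof is correct and takes the same route as the paper, which simply states that the lemma follows by an easy induction on $m$ — your argument (pair the element labeled $1$ with one of $2m-1$ partners, then induct on the remaining $2(m-1)$ elements) is precisely that induction, spelled out. The double-counting identity $(2m)! = 2^m m! (2m-1)!!$ is a nice additional check but not needed.
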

\begin{proof} This is an easy induction on $m$.
\end{proof}

\begin{lemma}\label{lem:fat} The coefficient of $\lambda^k\frac{x^n}{n!}$ in $\exp(\lambda(e^x-1-x-\frac{x^2}{2}))$ is the number of ways of partitioning the set $\{1,\ldots,n\}$ into $k$ sets, each of size at least $3$.
\end{lemma}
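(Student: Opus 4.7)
\medskip

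The plan is to apply the standard exponential formula for labeled structures, treating a partition into blocks of size $\geq 3$ as a disjoint union of single such blocks.

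First I would observe that
\[
e^x - 1 - x - \tfrac{x^2}{2} = \sum_{j \geq 3} \frac{x^j}{j!},
\]
so the coefficient of $\tfrac{x^j}{j!}$ in this series is $1$ for $j \geq 3$ and $0$ otherwise. This makes it the exponential generating function for a single nonempty subset of size at least $3$: given a label set of size $j$, there is exactly one such block, for $j \geq 3$.

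Next I would expand
\[
\exp\!\bigl(\lambda(e^x - 1 - x - \tfrac{x^2}{2})\bigr) = \sum_{k \geq 0} \frac{\lambda^k}{k!} \Bigl(\sum_{j \geq 3} \frac{x^j}{j!}\Bigr)^{\!k},
\]
so the coefficient of $\lambda^k$ equals $\tfrac{1}{k!}\bigl(\sum_{j\geq 3} x^j/j!\bigr)^k$. Expanding the $k$-th power and reading off the coefficient of $x^n/n!$ gives
\[
\frac{1}{k!}\sum_{\substack{(j_1,\ldots,j_k)\\ j_i \geq 3,\ \sum j_i = n}} \binom{n}{j_1,\ldots,j_k}.
\]
The multinomial coefficient $\binom{n}{j_1,\ldots,j_k}$ is precisely the number of ways to split $\{1,\ldots,n\}$ into an \emph{ordered} sequence of blocks of sizes $j_1,\ldots,j_k$ (all $\geq 3$). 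Dividing by $k!$ identifies orderings of the same unordered collection of $k$ blocks, yielding the number of unordered partitions of $\{1,\ldots,n\}$ into $k$ sets of size at least $3$.

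There is no real obstacle: the only point requiring care is the justification that dividing by $k!$ correctly removes overcounting, which holds because the $k$ blocks of any fat partition are pairwise disjoint nonempty sets, hence distinct, so the symmetric group $\Sigma_k$ acts freely on the ordered sequences of blocks underlying a given unordered partition. This is the standard labeled version of the exponential formula, specialized to the species of ``subsets of size $\geq 3$.''
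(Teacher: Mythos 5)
Your proof is correct and follows essentially the same route as the paper's: both expand $\exp(\lambda(e^x-1-x-\tfrac{x^2}{2}))$ as $\sum_k \frac{\lambda^k}{k!}\bigl(\sum_{j\geq 3}x^j/j!\bigr)^k$ and interpret the resulting terms as counting partitions into $k$ blocks of size at least $3$. You simply make explicit the combinatorial step the paper leaves implicit (the multinomial coefficients and the free $\Sigma_k$-action on orderings of distinct blocks), which is a fine level of detail.
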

 \begin{proof}The number of partitions of a set of $n$ elements into non-empty pieces is the coefficient of $\frac{x^{n}}{n!}$ in the power series expansion of $\exp(e^x-1)$:
\begin{align*} \exp(e^x-1) &= \exp\left(x+\frac{x^2}{2!}+\frac{x^3}{3!}+\ldots\right)\\
 &=1+\sum_{i\geq 1} \frac{x^i}{i!} + \frac{1}{2!} \sum_{i,j\geq 1} \frac{x^i}{i!} \frac{x^j}{j!}+\frac{1}{3!}\sum_{i,j,k \geq 1} \frac{x^i}{i!} \frac{x^j}{j!}\frac{x^k}{k!}+\ldots \end{align*}
If instead we take $\exp(\lambda(e^x-1)),$ the power of $\lambda$ keeps track of the number of pieces of   the partition.
\begin{align*} \exp(\lambda(e^x-1)) &= \exp\left(\lambda\left(x+\frac{x^2}{2!}+\frac{x^3}{3!}+\ldots\right)\right)\\
 &=1+\lambda\sum_{i\geq 1} \frac{x^i}{i!} + \frac{\lambda^2}{2!}\sum_{i,j \geq 1} \frac{x^i}{i!} \frac{x^j}{j!}+\frac{\lambda^3}{3!}\sum_{i,j,k \geq 1} \frac{x^i}{i!} \frac{x^j}{j!}\frac{x^k}{k!}+\ldots \end{align*}
Replacing $e^x-1$ by  $e^x-1-x-\frac{x^2}{2}$ in the above formulas restricts attention to fat partitions, i.e.\ partitions with at least three elements in each block.
\end{proof}

Since an admissible labeled graph  consists of
a fat partition of the set $\{1,\ldots,|H|\}$ and
a matching on $H$, Lemmas~\ref{lem:match} and ~\ref{lem:fat} give
\begin{gather*} |\mathcal{LG}(m,k)|= (2m-1)!!\big[\lambda^k\frac{x^{2m}}{(2m)!}\big]\exp\left(\lambda\left(e^x-1-x-\frac{x^2}{2}\right)\right) \\
=(2m-1)!!(2m)![\lambda^k x^{2m}]\exp\left(\lambda\left(e^x-1-x-\frac{x^2}{2}\right)\right). \end{gather*}
Here the square bracket denotes the {\em coefficient extraction operator}, which extracts the coefficient of the bracketed monomial expression from the following power series.  If $\cF$ is a formal power series in commuting variables $x,y,z,\ldots$  and $M$ is a monomial in these variables, the notation $[M]\cF$ denotes the coefficient of $M$ in $\cF$; this is itself  a power series in the   variables that do not occur in $M$.

Since you can only make a finite number of labeled graphs with $2m$ labeled half-edges, the function $$p_m(\lambda)=(2m-1)!![x^{2m}]\exp\left(\lambda\left(e^x-1-x-\frac{x^2}{2}\right)\right)$$ is a polynomial in $\lambda$,
$$
p_m(\lambda)=\frac{|\mathcal{LG}(m,1)|}{(2m)!}\lambda + \frac{|\mathcal{LG}(m,2)|}{(2m)!}\lambda^2+\ldots +\frac{|\mathcal{LG}(m,v_{\text{max}})|}{(2m)!}\lambda^{v_{\text{max}}},
$$
where ${v_{\text{max}}} = \deg p_m \leq \frac23 m$, as each admissible graph with $2m$ half-edges has at most $\frac23m$ vertices.
Using this with Proposition~\ref{prop:graph_sum} gives the following formula for our generating function:
\[
 \cF(\hbar)= \sum_{G \in \mathcal G} \frac{\hbar^{-\chi(G)}}{|\Aut(G)|}=\sum_{m,k \geq 0}\frac{|\mathcal{LG}(m,k)|}{(2m)!}\hbar^{m-k}
 =\sum_{m\geq 0} \hbar^m p_m(\hbar^{-1})
\]
i.e.
\begin{equation}\label{Gamma}
 \cF(\hbar) =\sum_{m\geq 0} \hbar^{m} (2m-1)!![x^{2m}]\exp\left(\hbar^{-1}\left(e^x-1-x-\frac{x^2}{2}\right)\right).
\end{equation}

\subsection{Connected graphs}\label{Ez}
When we counted how many admissible graphs we could make from $2m$ labeled edges, we did not care whether the results were connected or disconnected.  If we only want to count connected graphs, we can take the formal logarithm of the generating function. This is standard in the combinatorics literature; see, e.g., Stanley's classic book \cite{Stanley}, Chapter~5.1.

The basic principle is that, given an exponential  generating function that counts some type of (labeled) objects, formally exponentiating this series counts collections of such objects.  We have already used this to count graphs:  the generating function $\sum \frac{ x^n}{n!}$ counts the number of labeled  \emph{corollas} made of one vertex and $n$ half-edges, weighted by the automorphism group of the corolla (the symmetric group $\Sigma_n$).  An admissible graph   is then just a collection of corollas (with at least 3 half-edges each) together with a matching of half-edges.  To count collections of corollas, we exponentiated $\lambda\sum_{{n\geq 3}} \frac{ x^n}{n!}$,  then multiplied by the number of matchings to get the number of graphs.

\subsection{Gaussian integrals  and 0-dimensional QFT}
\label{gauss}

We can replace the expression $\hbar^m(2m-1)!!$ in   Formula~(\ref{Gamma}) by a Gaussian integral.  We have
$$
\frac{1}{\sqrt{2\pi \hbar}}
\int_{-\infty}^\infty e^{-\frac{x^2}{2\hbar}}x^ndx =
\begin{cases}
0  & n \text{ odd}\\
(2m-1)!!\hbar^m &n=2m
\end{cases}
$$
Using this we can  ignore the fact that we are only interested in extracting even powers of $x$ in
$\exp(\hbar^{-1}(e^x-1-x-\frac{x^2}{2}))$, i.e.\ we can write
$$
 \cF(\hbar)=\sum_{n\geq 3} \left(\frac{1}{\sqrt{2\pi \hbar}}\int_{-\infty}^\infty e^{-\frac{x^2}{2\hbar}}x^ndx\right)[x^{n}]\exp\left(\hbar^{-1}\left(e^x-1-x-\frac{x^2}{2}\right)\right)
$$
We now do something that would be unforgivable if we were dealing with power series representatives of actual functions, namely we interchange the summation and integral to get
\begin{align*} \cF(\hbar)\,``&="\,\frac{1}{\sqrt{2\pi \hbar}}\int e^{-\frac{x^2}{2\hbar}}\exp\left(\hbar^{-1}\left(e^x-1-x-\frac{x^2}{2}\right)\right)\,dx\\
&=\frac{1}{\sqrt{2\pi \hbar}}\int \exp\left(\hbar^{-1}\left(e^x-1-x-x^2 \right)\right)\,dx \end{align*}
The series from which we derived this integral  is formally understood as the \emph{path-integral} for the $0$-dimensional scalar quantum field theory with  potential   $e^x-1-x-\frac{x^2}{2}$. %
The generating function $\cF(\hbar)$ is the \emph{partition function} and $\log \cF(\hbar)$, which counts the associated set of connected graphs, is the \emph{free energy} associated to this scalar QFT.
For more information about this perspective, see \cite{Borinsky} Chapter~3.

\section{The virtual Euler characteristic of the even commutative graph complex}
\label{secgc2}
The even commutative graph complex $\mathcal{GC}_2$ breaks into subcomplexes $\mathcal{GC}_2^{(n)}$ generated by   connected admissible graphs with   fundamental group $F_n$, modulo the relation that an automorphism inducing an odd permutation of the edges changes the sign of the generator.  In particular, if there is an odd permutation the generator is zero. Sometimes $\mathcal{GC}_2$ is defined to exclude graphs with loops.  We will allow graphs with loops, but this makes no difference in the homology (or the Euler characteristic) (see \cite{Willwacher}, Proposition 3.4).

We recall the interpretation of $\mathcal{GC}_2^{(n)}$ in terms of Outer space (see \cite{CoVo}, Section 5).    Let $CV_n^*$ denote the simplicial closure of Outer space $CV_n$, and $CV_n^\infty=CV_n^*\setminus CV_n$ the subcomplex at infinity.  $\Out(F_n)$ acts on $CV_n^*$ preserving  $CV_n^\infty$, and the relative chain complex $$C_*\left(CV_n^*/\Out(F_n),CV_n^\infty/\Out(F_n);\Q\right)$$ of the quotient spaces has one generator in dimension $m$ for each connected admissible graph $G$ with $m+1$ edges that has no automorphisms inducing an odd permutation of its edges (see, e.g., \cite{HV}). In other words, this relative chain complex is precisely  $\mathcal{GC}_2^{(n)}$.

Let $\chi(\mathcal{GC}_2^{(n)})$ denote the orbifold Euler characteristic of this relative quotient (see, e.g., Brown~\cite{Brown}, Chapter IX, Section 7). Then   %
$$
\chi(\mathcal{GC}_2^{(n)})=\sum_{\langle \sigma \rangle} \frac{(-1)^{\dim \sigma}}{|\Stab(\sigma)|},
$$
where $\sigma$ runs over all orbit representatives of the action of $\Out(F_n)$ on $CV_n=CV_n^*\setminus CV_n^\infty$, %
i.e.\ all  connected admissible graphs with fundamental group $F_{n}$.  Assembling the
$\chi(\mathcal{GC}_2^{(n)})$ into a generating function (with a shift  $n\mapsto (n+1)$ to account for the fact that $n=1-\chi(G)$) gives
\begin{align*} \cX(\hbar)&=\sum_{n\geq 1} \chi(\mathcal{GC}_2^{(n+1)})\hbar^n=\sum_{G\in\mathcal G_c} {(-1)}^{e(G)}\frac{\hbar^{-\chi(G)}}{|\Aut(G)|} \\
&= \hbar \left( \frac{(-1)^2}{|\Aut( {\begin{tikzpicture}[x=1ex,y=1ex,baseline={([yshift=-.5ex]current bounding box.center)}] \coordinate (vm); \coordinate [left=.7 of vm] (v0); \coordinate [right=.7 of vm] (v1); \draw (v0) circle(.7); \draw (v1) circle(.7); \filldraw (vm) circle (1pt); \end{tikzpicture}} )|} + \frac{(-1)^3}{|\Aut( {\begin{tikzpicture}[x=1ex,y=1ex,baseline={([yshift=-.5ex]current bounding box.center)}] \coordinate (v0); \coordinate [right=1.5 of v0] (v1); \coordinate [left=.7 of v0] (i0); \coordinate [right=.7 of v1] (o0); \draw (v0) -- (v1); \filldraw (v0) circle (1pt); \filldraw (v1) circle (1pt); \draw (i0) circle(.7); \draw (o0) circle(.7); \end{tikzpicture}} )|} + \frac{(-1)^3}{|\Aut( {\begin{tikzpicture}[x=1ex,y=1ex,baseline={([yshift=-.5ex]current bounding box.center)}] \coordinate (vm); \coordinate [left=1 of vm] (v0); \coordinate [right=1 of vm] (v1); \draw (v0) -- (v1); \draw (vm) circle(1); \filldraw (v0) circle (1pt); \filldraw (v1) circle (1pt); \end{tikzpicture}} )|} \right) +\ldots = -\frac{1}{12} \hbar + \frac{1}{360} \hbar^3 + \ldots \end{align*}
where we sum over the set $\mathcal G_c$ of all  connected  graphs in $\mathcal G$.

Exponentiating $\cX(\hbar)$, invoking Proposition~\ref{prop:graph_sum} and recalling the definition of $p_m(\lambda)$ from Section~\ref{labeled} gives
\begin{align*} \exp(\cX(\hbar))=\cE(\hbar)&= \sum_{G\in\mathcal G } (-1)^{e(g)}\frac{\hbar^{-\chi(G)}}{|\Aut(G)|}=\sum_{m,k \geq 0}(-1)^m \frac{|\mathcal{LG}(m,k)|}{(2m)!}\hbar^{m-k}\\
&=\sum_{m\geq 0} (-1)^m \hbar^m p_m(\hbar^{-1})\\
&=\sum_{m\geq 0} (-\hbar)^{m} (2m-1)!![x^{2m}]\exp\left(\hbar^{-1}\left(e^x-1-x-\frac{x^2}{2}\right)\right) \end{align*}
We are interested in  $[\hbar^n]\cE(\hbar)$.  It is just as good to find a formula for $\cE(-\hbar)$, since $[\hbar^n]\cE(-\hbar)=(-1)^n [\hbar^n]\cE(\hbar)$. Now note
$$\cE(-\hbar)= \sum_{m\geq 0} \hbar^{m} (2m-1)!![x^{2m}]\exp\left(-\hbar^{-1}\left(e^x-1-x-\frac{x^2}{2}\right)\right).$$
If we again replace the factor $\hbar^m(2m-1)!!$ with a Gaussian integral and
naively interchange summation and integration as in Section~\ref{gauss}, we get  the path-integral of the $0$-dimensional   QFT with potential $-(e^x-1-x-\frac{x^2}{2})$ and action $-(e^x-1-x),$ written
\begin{align} \label{Epathintegral} \cE(-\hbar)=\frac{1}{\sqrt{2\pi \hbar}}\int e^{-\hbar^{-1}(e^x-1-x)}dx.\end{align}
Since $e^x-1-x = \sum_{n \geq 2} \frac{x^n}{n!}$ we recover Kontsevich's Equation~(3) from \cite{Kont}.

We can call the path-integral above a \emph{topological quantum field theory},
because the partition function $\cE(\hbar)$ and the free energy $\log \cE(\hbar) = \cX(\hbar)$
encode the topological invariants $\chi(\mathcal{GC}_2^{(n)})$. 

\subsection{Solving the path-integral}
For general potentials  such a  path-integral expression  does not converge, but in the case of \eqref{Epathintegral} it actually does as long as $\hbar$ is positive.
To see this, recall Euler's integral formula for the $\Gamma$-function:
$\Gamma(N)=\int_0^\infty t^{N-1}e^{-t}dt$.  The change of variables $t\mapsto Ne^x$ gives
$$\Gamma(N)=  \int_{-\infty}^\infty N^N e^{Nx-Ne^x}dx.$$
This looks tantalizingly close to the formula for $ \cE(-\frac{1}{N})$;   if we now divide by $N^Ne^{-N}\sqrt{2\pi\frac{1}{N}} $ we in fact get the exact formula: $$\frac{\Gamma(N)}{N^Ne^{-N}\sqrt{2\pi\frac{1}{N}}}= \frac{1}{\sqrt{2\pi\frac{1}{N}}}\int_{-\infty}^\infty e^{N(1+x-e^x)}dx=  \cE(-\tfrac{1}{N}).$$
Taking the log of both sides   gives
$$\log\Gamma(N)-N \ln N + N -\frac{1}{2}(\log 2\pi - \log N) = \log \cE(-\tfrac{1}{N})=\cX(-\tfrac{1}{N})=\sum_{G\in \mathcal G_c}\frac{(-1)^{e(G)}}{|\Aut(G)|}\left(-\frac{1}{N}\right)^{-\chi(G)}$$
We now recall that Stirling's asymptotic expansion of $\log\Gamma(N)$ for large $N$ in terms of Bernoulli numbers, which are themselves defined by the generating function $\sum_{n= 0}^\infty \frac{B_n}{n!} x^n = x/(e^x-1)$, (see, e.g.,  \cite{Bruijn}, \S3.10) is %
$$\log \Gamma(N) \sim N\ln N - N +  \frac{1}{2}(\log 2\pi  -  \log N) +\sum_{k=1}^{\infty} \frac {B_{2k}}{2k(2k-1)}{\frac{1}{N^{2k-1}}} \text{ for } N \rightarrow \infty,$$
so that the coefficient of  $\tfrac{1}{N^{n}}$ in $\cX(-\tfrac{1}{N})$ is equal to  $\frac {B_{2k}}{2k(2k-1)}$ if $n=2k-1$ and vanishes otherwise. As all even coefficients of $\cX(\hbar)$ therefore vanish we have $\cX(-\hbar) = -\cX(\hbar)$.

From a QFT perspective, we can say that 
we effectively \emph{solved} the QFT encoded in~\eqref{Epathintegral}
by finding a closed-form expression for the expansion coefficients 
of the free energy $\cX(\hbar)$.

We can summarize all our findings of this section in the following formula,
$$
\chi(\mathcal{GC}_2^{(n+1)}) =
[\hbar^n]
\cX(\hbar)=
- \sum_{\substack{G\in \mathcal G_c \\  \chi(G)=-n } } \frac{(-1)^{e(G)}}{|\Aut(G)|}=
\begin{dcases}
0 & \text{ if $n$ is even}\\
 -\frac {B_{n+1}}{n(n+1)}  &\text { if $n$ is odd}.
 \end{dcases}
$$

The formal interchange of summation and integration while passing to an integral representation can be justified by employing the formalism of asymptotic expansions and Laplace's method as   was done in~\cite[Proposition 5.3]{BoVo}.

\section{The virtual Euler characteristic of \texorpdfstring{$\Out(F_n)$}{OutFn}}

To compute the virtual Euler characteristic of $\Out(F_n)$ one uses the spine $K_n$ of Outer space instead of the whole space, since the action on the spine is cocompact.  The spine has the structure of a cube complex with one cube for every triple $(g,G,F)$ consisting of a connected admissible graph $G$, a subforest $F$ and an isomorphism $g\colon F_n\cong\pi_1(G)$. The action of $\Out(F_n)$ just changes the isomorphism $g$, so computing the orbifold Euler characteristic reduces to counting  pairs $(G,F)$, weighted by their stabilizers.  In this section we show how to do this, using the fact that the stabilizer of  the cube $(G,F)$ is isomorphic to $\Aut(G,F)$, the automorphisms of $G$ that send $F$ to itself.   By Kontsevich's theorem the virtual Euler characteristic of $\Out(F_n)$ is also the  virtual  Euler characteristic of the Lie graph complex~\cite{Kont}.

A pair $(G,F)$ consisting of an admissible graph and a spanning forest $F$ can be thought of as a collection of trees together with a matching of their leaves.  To count such pairs we should count trees, exponentiate to count forests, connect the leaves of forests in pairs, then take logarithm to restrict to connected graphs.

The dimension of the cube  in the spine of Outer space corresponding to $(G,F)$ is equal to the number of edges in the forest, so if we are interested in computing  Euler characteristic we only need to know the parity of $e(F)$.
Using Brown's formula~\cite{Brown} for the virtual Euler characteristic of a discrete group acting properly and cocompactly on a contractible cell complex, we therefore have
$$\chi(\Out(F_n)) = \sum_{(G,F)}\frac{(-1)^{e(F)}}{|\Aut(G,F)|},$$
where the sum is over isomorphism classes of pairs $(G,F)$ with $G$ a connected admissible graph and $\chi(G)=1-n$.

As in the previous sections it is convenient to define the generating function
\begin{align*} \cY(\hbar) &= \sum_{n \geq 1} \chi(\Out(F_{n+1})) \hbar^{n} = \sum_{(G,F)}\frac{(-1)^{e(F)}}{|\Aut(G,F)|} \hbar^{-\chi(G)} \\
&= \hbar \left( \frac{1}{|\Aut( {\begin{tikzpicture}[x=1ex,y=1ex,baseline={([yshift=-.5ex]current bounding box.center)}] \coordinate (vm); \coordinate [left=.7 of vm] (v0); \coordinate [right=.7 of vm] (v1); \draw (v0) circle(.7); \draw (v1) circle(.7); \filldraw (vm) circle (1pt); \end{tikzpicture}} )|} + \frac{1}{|\Aut( {\begin{tikzpicture}[x=1ex,y=1ex,baseline={([yshift=-.5ex]current bounding box.center)}] \coordinate (v0); \coordinate [right=1.5 of v0] (v1); \coordinate [left=.7 of v0] (i0); \coordinate [right=.7 of v1] (o0); \draw (v0) -- (v1); \filldraw (v0) circle (1pt); \filldraw (v1) circle (1pt); \draw (i0) circle(.7); \draw (o0) circle(.7); \end{tikzpicture}} )|} + \frac{1}{|\Aut( {\begin{tikzpicture}[x=1ex,y=1ex,baseline={([yshift=-.5ex]current bounding box.center)}] \coordinate (vm); \coordinate [left=1 of vm] (v0); \coordinate [right=1 of vm] (v1); \draw (v0) -- (v1); \draw (vm) circle(1); \filldraw (v0) circle (1pt); \filldraw (v1) circle (1pt); \end{tikzpicture}} )|} + \frac{(-1)}{|\Aut( {\begin{tikzpicture}[x=1ex,y=1ex,baseline={([yshift=-.5ex]current bounding box.center)}] \coordinate (vm); \coordinate [left=1 of vm] (v0); \coordinate [right=1 of vm] (v1); \draw[col1,thick] (v0) -- (v1); \draw (vm) circle(1); \filldraw (v0) circle (1pt); \filldraw (v1) circle (1pt); \end{tikzpicture}} )|} + \frac{(-1)}{|\Aut( {\begin{tikzpicture}[x=1ex,y=1ex,baseline={([yshift=-.5ex]current bounding box.center)}] \coordinate (v0); \coordinate [right=1.5 of v0] (v1); \coordinate [left=.7 of v0] (i0); \coordinate [right=.7 of v1] (o0); \draw[thick,col1] (v0) -- (v1); \filldraw (v0) circle (1pt); \filldraw (v1) circle (1pt); \draw (i0) circle(.7); \draw (o0) circle(.7); \end{tikzpicture}} )|} \right) +\ldots \\
&= -\frac{1}{24} \hbar - \frac{1}{48} \hbar^2 - \frac{161}{5760} \hbar^3 + \ldots \end{align*}
where we have colored the forest orange in the underlying graph.

\subsection{Counting trees}
A tree is a connected graph with no cycles; it has Euler characteristic equal to 1.  
For trees we will allow for the involution in its Definition~\ref{def:graph} as a graph to have fixed points. Each fixed point corresponds to a \emph{leave} of the tree.  We will denote the number of leaves of a tree $T$ by $\ell(T)$.
A \emph{rooted tree} is a tree with one univalent vertex, called the \emph{root} (note that in our definition of graph, a univalent vertex is not the same as a leaf).  
To extend our methods to trees, we will require   all vertices which are not the root  to have degree at least $3$, so that there are only a finite number of trees (or rooted trees) with a fixed number of leaves.

It is a classic combinatorial exercise to count rooted trees (see, e.g., \cite{Stanley}, Chapter~5.3). In this section we will solve the counting problem   that is relevant for the computation of the (virtual) Euler characteristic of $\Out(F_n)$.
Let $\mathcal R$ denote the set of isomorphism classes of rooted trees and define the generating function
\begin{equation}
\cR(\lambda,x)=\sum_{T_0\in\mathcal R} \frac{\lambda^{v(T_0)}x^{\ell(T_0)}}{|\Aut(T_0)|}.
\end{equation}
Eventually, we will only be interested in the special case $\lambda = -1$, which 
corresponds to the generating function of alternating sums over rooted trees with fixed number of leaves. 
But for the benefit of a concrete combinatorial argument, we will start with the general case and specialize later.

For example,   consider the following isomorphism classes of rooted trees
\newcommand{\rootedtree}[1]
{
  \draw[thick] (0,0) to (0,.5);
  \rootvertex{(0,.5)};
  \vertex{(0,0)};
  \foreach \x in {1,...,#1}
{
\pgfmathsetmacro\y{int(#1 + 1)}
\draw [thick] (0,0) to (-180*\x/\y:.5);
 \node  (x) at (-180/\y*\x:.7) {$x$};
}
}

\begin{center}\begin{tikzpicture} \draw[thick] (0,0) to (0,.5); \rootvertex{(0,.5)}; \node (x) at (0,-.2) {$x$}; \begin{scope}[xshift=3cm] \rootedtree{2} \begin{scope}[xshift=3.5cm] \rootedtree{3} \end{scope} \end{scope} \begin{scope}[xshift=11cm] \draw[thick] (0,0) to (0,.5); \rootvertex{(0,.5)}; \vertex{(0,0)}; \draw[thick] (0,0) to (-60:.5); \node (x) at (-60:.7) {$x$}; \draw[thick] (0,0) to (-120:.7); \vertex{(-120:.7)}; \draw[thick] (-120:.7) to ($(-120:.7) + (-120:.5)$); \node (x) at ($(-120:.7) + (-120:.7)$) {$x$}; \draw[thick] (-120:.7) to ($(-120:.7) + (-60:.5)$); \node (x) at ($(-120:.7) + (-60:.7)$) {$x$}; \end{scope} \end{tikzpicture}
\end{center}
where the root is indicated as a little square and
leaves are marked by $x$.
Note that the first rooted tree consists only of the root vertex and a single leaf.  In our definition of graph,  this is a single half-edge with trivial vertex partition and trivial involution.
The trees above contribute the following terms to $ \cR$:
 $$ \cR(\lambda,x) = x + \lambda\frac{x^2}{2!} + \lambda\frac{x^3}{3!}+ \lambda^2\frac{x^3}{2!}+  \ldots$$

Rooted trees have an inherently recursive structure. We can pick  such a tree up by the root and attach it to a leaf of another rooted tree. This gives a recursion which starts with the rooted tree that contains only the root vertex and a single half-edge.  %
This recursion is encoded by the following diagram, where the blue crosshatched circle indicates the set of all possibilities for the remainder of the tree:
\newcommand{\rootedblob}[1]
{
  \draw[thick] (0,0) to (0,.5);
  \rootvertex{(0,.5)};
  \vertex{(0,0)};
  \foreach \x in {1,...,#1}
{
\pgfmathsetmacro\y{int(#1 + 1)}
\draw [thick] (0,0) to (-180*\x/\y:.7);
\draw [pattern=crosshatch, pattern color=col2]  (-180*\x/\y:1.0)   circle (.3);
}
}

\begin{center}\begin{tikzpicture} \draw[thick] (0,0) to (0,.5); \rootvertex{(0,.5)}; \draw [pattern=crosshatch, pattern color=col2] (0,-.3) circle (.3); \node (e) at (1.5,0) {$=$}; \begin{scope}[xshift=3cm] \draw[thick] (0,0) to (0,.5); \rootvertex{(0,.5)}; \node (x) at (0,-.2) {$x$}; \node (plus) at (1.5,0) {$+$}; \begin{scope}[xshift=3.cm] \node (plus) at (1.5,0) {$+$}; \rootedblob{2} \begin{scope}[xshift=3.5cm] \node (plus) at (2,0) {$+$}; \node (dots) at (3,0) {$\ldots$}; \rootedblob{3} \end{scope} \end{scope} \end{scope} \end{tikzpicture}
\end{center}

In terms of the generating function $ \cR(\lambda,x)$ we have
\begin{equation}\label{RTZ}
 \cR(\lambda,x)= x +  \lambda\frac{ \cR(\lambda,x)^2}{2!}+\lambda\frac{ \cR(\lambda,x)^3}{3!}+\ldots
= x+ \lambda(e^{ \cR(\lambda,x)}-1- \cR(\lambda,x)),
\end{equation}
where the factorials in the denominators account for permuting the   blue crosshatched circles.  
The power of $\lambda$ keeps track of the number of vertices in the tree, but for the Euler characteristic calculation we only care about the parity of $v$, i.e.\ we want to count rooted trees $T_0$ with sign $(-1)^{v(T_0)}$.  To accomplish this we can set $\lambda=-1$, which results in a neat closed expression for the generating function:
\begin{proposition} The generating function for rooted trees counted with sign $(-1)^{v(T_0)}$ is
\label{prop:Rsum}
\begin{align*} \cR(-1,x)=\sum_{T_0\in\mathcal R} \frac{(-1)^{v(T_0)}x^{\ell(T_0)}}{|\Aut(T_0)|}= \log(1+x) = \sum_{\ell\geq 1} \frac{(-1)^{\ell+1}}{\ell} x^\ell. \end{align*}
\end{proposition}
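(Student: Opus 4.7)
The plan is to use the recursive functional equation (\ref{RTZ}) satisfied by $\cR(\lambda,x)$ and simply specialize to $\lambda = -1$. This should make the nonlinear terms collapse in a very convenient way.

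First I would note that the equation (\ref{RTZ}) determines $\cR(\lambda,x)$ uniquely as a formal power series in $x$ with coefficients in $\Q[\lambda]$: the constant term in $x$ is forced to be $0$, and inductively each coefficient $[x^n]\cR(\lambda,x)$ for $n \geq 1$ is determined by lower-order coefficients since the right-hand side $x + \lambda(e^\cR - 1 - \cR)$ only depends on powers $\cR^k$ with $k \geq 2$ beyond the linear $x$. In particular $\cR(-1,x)$ is a well-defined formal power series with zero constant term, so $e^{\cR(-1,x)}$ makes sense.

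Next I would substitute $\lambda = -1$ directly into (\ref{RTZ}):
\begin{align*}
\cR(-1,x) &= x - (e^{\cR(-1,x)} - 1 - \cR(-1,x)) \\
&= x + 1 + \cR(-1,x) - e^{\cR(-1,x)}.
\end{align*}
The $\cR(-1,x)$ on both sides cancels, leaving the clean identity $e^{\cR(-1,x)} = 1 + x$. Taking logarithms (valid for formal power series since $\cR(-1,x)$ has no constant term and $1+x$ has constant term $1$) yields $\cR(-1,x) = \log(1+x)$. The stated series expansion $\sum_{n\geq 1} \frac{(-1)^{n+1}}{n}x^n$ is then the usual Mercator expansion.

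There is essentially no obstacle here: the entire content of the proposition is the observation that at $\lambda = -1$ the exponential-type self-reference in the recursion collapses against the linear $\cR$ term. The only point worth being careful about is the formal-power-series justification, i.e.\ confirming that the recursion has a unique solution and that composition with $\exp$ and $\log$ is legitimate, both of which are standard.
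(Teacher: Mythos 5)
Your proposal is correct and follows exactly the paper's own argument: substitute $\lambda=-1$ into the recursion~(\ref{RTZ}), cancel the $\cR(-1,x)$ terms to obtain $e^{\cR(-1,x)}=1+x$, and take logarithms. The extra remarks on uniqueness of the formal power series solution and the legitimacy of composing with $\exp$ and $\log$ are a welcome bit of care that the paper leaves implicit.
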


\begin{proof}
By Equation~(\ref{RTZ}) we have
$$ \cR(-1,x)=x-e^{ \cR(-1,x)}+1+ \cR(-1,x)$$
i.e.\ $e^{ \cR(-1,x)}=x+1.$
\end{proof}

Let $\mathcal T(\ell,k)$ (resp. $\mathcal {R}(\ell,k)$) be the set of isomorphism classes of  trees (resp. rooted trees) with $\ell$ leaves and $k$ vertices and $\mathcal{LT}(\ell,k)$ (resp. $\mathcal {LR}(\ell,k)$) be the corresponding set of leaf-labeled   trees (resp. rooted trees), i.e.\ each leaf is labeled by a number in $1,\ldots,\ell$.
The symmetric group $\Sigma_\ell$ acts on the set of leaf-labels, with orbit corresponding to an isomorphism class of trees $T$ and stabilizer $\Aut(T)$, so the orbit-stabilizer theorem  gives
\begin{gather} \label{eq:orbtree} \sum_{ T \in \mathcal T(\ell,k)} \frac{1}{|\Aut (T)|} = \frac{|\mathcal{LT}(\ell,k)|}{\ell!} \hbox{ and } \sum_{ T_0 \in \mathcal R(\ell,k)} \frac{1}{|\Aut (T_0)|} = \frac{|\mathcal{LR}(\ell,k)|}{\ell!}. \end{gather}

\begin{proposition}  When counted with sign $(-1)^{v(T_0)}$ there are 
 $(-1)^{\ell+1}(\ell-1)!$ leaf-labeled rooted trees $T_0$ with $\ell\geq 1$ leaves.
\end{proposition}

\begin{proof}
Let $\mathcal{LR}(\ell)$ be the set of leaf-labeled rooted trees with $\ell \geq 1$ leaves. We are interested in,
$$
\sum_{T_0\in\mathcal{LR}(\ell)} (-1)^{v(T_0)}
=
\sum_{k \geq 0} (-1)^k  |\mathcal{LR}(\ell,k)|.
$$
Combining~\eqref{eq:orbtree} and Proposition~\ref{prop:Rsum} gives 
\begin{gather*} \sum_{k\geq 0}(-1)^k|\mathcal{LR}(\ell,k)|=\ell!\sum_{k\geq 0} \sum_{T_0\in \mathcal{R}(\ell,k)} \frac{(-1)^k}{|\Aut(T_0)|}=\ell!\frac{(-1)^{\ell+1}}{\ell}=(-1)^{\ell+1}(\ell-1)!. \qedhere \end{gather*}
\end{proof}

We next count unrooted trees with labeled leaves.  Note that an unrooted tree must have at least three leaves, since we do not allow univalent or bivalent vertices.

\begin{proposition}  When counted with sign $(-1)^{v(T)}$ there are 
 $(-1)^{\ell}(\ell-2)!$ leaf-labeled trees $T$ with $\ell\geq 3$ leaves.
\end{proposition}

\begin{proof}  An (unrooted) tree with $\ell$ leaves labeled $1,\ldots,\ell$ is equivalent to a rooted tree with $\ell-1$ leaves, where the leaf labeled $\ell$ is taken for the root (or, equivalently, an extra half-edge is added which is matched with the half-edge labeled $\ell$ and forms its own block of the vertex partition).  By the above lemma, there are $(-1)^{\ell}(\ell-2)!$ such trees, counted with sign $(-1)^{v(T)}$. 
\end{proof}

Let $\cT(x)$ be the generating function for signed (unrooted) trees $T$, with $x$ marking leaves, i.e.
$$ \cT(x) =
\sum_{T\in \mathcal T}
(-1)^{v(T)}
\frac{x^{\ell(T)}}{|\Aut (T)|}
,$$
where the  sum is over the set of all isomorphism classes of trees $\mathcal T$.

By~\eqref{eq:orbtree} and the above Corollary,  
$$\cT(x) =
\sum_{\ell,k \geq 0}
(-1)^k
|\mathcal{LT}(\ell,k)|
\frac{x^\ell}{\ell!}
=
\sum_{\ell \geq 3}
(-1)^{\ell}(\ell-2)!
\frac{x^\ell}{\ell!}
=
\sum_{\ell \geq 3}
(-1)^{\ell}
\frac{x^\ell}{\ell(\ell-1)}
$$
It is easy to check that the last expression agrees with the power series expansion of 
$$\cT(x) =
-x-\frac{x^2}{2} + (1+x) \log(1+x).
$$

Our passing from the generating function $\cR(-1,x)$ of signed rooted trees  to the 
generating function $\cT(x)$ of signed trees  has an interpretation in terms 
of a \emph{Legendre transformation} that is also of significance in the QFT context (see, e.g., \cite{Borinsky}, Chapter~5.8).

\subsection{Counting graphs with a marked forest}
A spanning forest $F$ in a graph $G$ has the same vertices and half-edges as $G$ but only some of the edges; in particular it is not allowed to contain a cycle of edges.   
To count signed pairs $(G,F)$ with $G$ admissible we will first count labeled pairs as before, but we will only label the half-edges that are not contained in edges of $F$, i.e. these are the leaves of $F$.  Notice that $F$ must have an even number $2\ell$ of leaves since $G$ is obtained by matching pairs of them, and $G$ itself has no leaves.     

 Let $\mathcal{LP}(t,2\ell,m)$ be the set of labeled pairs $(G,F)$ such that $F$ has $t$ trees with a total of $2\ell$ leaves and $m$ edges.  The symmetric group  $\Sigma_{2\ell}$ acts on such pairs by permuting the labels.  The orbit of a labeled pair is the unlabeled pair $(G,F)$, and the stabilizer is $\Aut(G,F)$, the automorphisms of $G$ that preserve $F$.  We now run the argument exactly as we did when counting graphs in Sections~\ref{unlabeled} and \ref{labeled}:
The orbit-stabilizer theorem gives
$$|\mathcal{LP}(t,2\ell,m)|=\sum_{{\rm orbits}} |{{\rm orbit}}| = \sum_{(G,F)\in\mathcal{P}(t,2\ell,m)} \frac{ (2\ell)!}{|\Aut(G,F)|},$$
where $\mathcal P(t,2\ell,m)$ is the set of unlabeled pairs $(G,F)$.

For each tree $T$ we have $e(T)-v(T) =1$. Therefore, in order to count forests with sign $(-1)^{e(F)}$, we will use the function 
$$ -\cT(x) =
-\sum_{T\in \mathcal T}
(-1)^{v(T)}
\frac{x^{\ell(T)}}{|\Aut (T)|}
=
\sum_{T\in \mathcal T}
(-1)^{e(T)}
\frac{x^{\ell(T)}}{|\Aut (T)|}.
$$ 
In order to keep track of how many trees there are in a forest we mark their number by $\tau$ and take $\exp(-\tau \cT(x))$.
Since a labeled pair is obtained by taking a forest with $t$ trees and matching their $2\ell$ leaves, we have
$$\frac{1}{(2\ell)! }\sum_{m \geq 0} (-1)^{m} |\mathcal{LP}(t, 2\ell,m)|=(2\ell-1)!!\,[\tau^t x^{2\ell}]\exp(-\tau\cT(x)).$$
For each labeled pair $(G,F) \in \mathcal{LP}(t,2\ell,m)$  the forest $F$ has $t$ trees, so we have  $v(F)-e(F)=t$ and $-\chi(G)=\ell-t$. Therefore
\begin{equation}\label{count}
\sum_{(G,F)}(-1)^{e(F)}\frac {\hbar^{-\chi(G)}}{|\Aut(G,F)|}=\sum_{\ell\geq 0} \hbar^{\ell} (2\ell-1)!![x^{2\ell}]\exp(-\hbar^{-1}\cT(x)),
\end{equation}
where we sum over all isomorphism classes of pairs $(G,F)$.

Because $\exp(-\hbar^{-1}\cT(x))$ counts leaf-labeled forests,  Equation~\eqref{count} can be seen as a special case of Proposition~4.7 in \cite{BoVo}.

Using the exponential formula, we finally obtain an 
expression for the generating function $\cY(\hbar)$ of $\chi(\Out(F_n))$:
$$
\exp (\cY(\hbar)) = 
\sum_{\ell\geq 0} \hbar^{\ell} (2\ell-1)!![x^{2\ell}]\exp(-\hbar^{-1}\cT(x)).
$$
As in the previous sections, we can also write this formally as an integral expression:
\begin{align} \begin{aligned} \label{Yintegral} \exp (\cY(\hbar)) &= \int \frac{\dd x}{\sqrt{2 \pi \hbar}} \exp \left( \hbar^{-1}( x - (1+x) \log (1+x) ) \right) \\
&= \int \frac{\dd x}{\sqrt{2 \pi \hbar}} \exp \left( - \hbar^{-1} \sum_{n \geq 3} (-1)^{n} \frac{x^n}{n(n-1)} \right). \end{aligned} \end{align}
In QFT terminology, we have shown that the free energy of the 
$0$-dimensional QFT with action $x - (1+x) \log (1+x)$ 
encodes the topological invariants $\chi(\Out(F_n))$.

Note that expression~\eqref{Yintegral} agrees with Equation~(1) of \cite{Kont} up to an irrelevant sign in front of $x$. The reason for this sign difference is that we computed the virtual Euler characteristic of $\Out(F_n)$ using the spine of Outer space, whereas Kontsevich computes the Euler characteristic of the Lie graph complex, which is only quasi-isomorphic to the chain complex of the spine.

\subsection{A renormalized topological QFT}
In~\cite{BoVo} we obtained different expressions for the invariants $\chi(\Out(F_n))$ (Theorem B and Proposition 8.1) that enabled us to prove that $\chi(\Out(F_n)) < 0$ for all $n \geq 2$ and that their growth rate is given by $\chi(\Out(F_n)) \sim -\tfrac{1}{\sqrt{2\pi}} \Gamma(n-\tfrac32)/\log^2n$ for $n \rightarrow \infty$ (Theorem A).

Key intuition for proving these facts came from the observation that the invariants $\chi(\Out(F_n))$ are also encoded by a \emph{renormalization} of the QFT described by Equation~\eqref{Epathintegral} in Section~\ref{secgc2}.  
Specifically, the generating function $\cY(\hbar)$ also respects the implicit path-integral expression
\begin{align*} 1 &= \frac{1}{\sqrt{2\pi \hbar}} \int \exp\left( -\hbar^{-1}(e^x-x-1) + \frac{x}{2} + \cY(-\hbar e^{-x}) \right) dx. \end{align*}
This equation fixes the values of the invariants $\chi(\Out(F_n))$ and enables one  to study them.
To fully justify this, a precise definition of the renormalization procedure 
in QFT is needed, but this lies beyond the scope of these expository notes (see \cite{BoVo}~Section~6 for a derivation).

The QFT interpretation of the above equation, however,  is readily stated: The generating function
$\frac{x}{2} + \cY(-\hbar e^{-x})$ encodes  \emph{counterterms} or \emph{renormalization constants} that renormalize 
the QFT with action $-(e^x-x-1)$, which was the subject of Section~\ref{secgc2}, in a generalized sense. See~\cite{Borinsky} Chapters~5--7 for a detailed algebraic and combinatorial treatment of such 
renormalization operations.

\newcommand*{\doilink}[2]{\href{https://doi.org/\detokenize{#2}}{#1}}
\newcommand*{\arxivlink}[1]{\href{https://arxiv.org/abs/\detokenize{#1}}{arXiv:#1}}

\end{document}